\theoremstyle{plain}
 \newtheorem{thm}{Theorem}[section]
\theoremstyle{definition}
\theoremstyle{plain}
\newtheorem{theorem}[thm]{Theorem}
\newtheorem{lemma}[thm]{Lemma}
\theoremstyle{definition}
\newtheorem{defin}[thm]{Definition}
\newtheorem{remark}[thm]{Remark}
\numberwithin{equation}{section}
\newcommand{\hol}{\ensuremath{\mathcal{O}}}
\newcommand\la{\lambda}
\newcommand\s{\sigma}
\newcommand\e{\epsilon}
\newcommand\Ga{\Gamma}
\newcommand\ga{\gamma}
\newcommand\de{\delta}
\DeclareMathOperator{\Pic}{Pic}
\newcommand{\ra}{\ensuremath{\rightarrow}}
\newcommand{\RR}{\mathbb{R}}
\newcommand{\ZZ}{\mathbb{Z}}
\begin{document}
\title[
Irrational pencils and  VIP's]{Irrational pencils, and
 characterization of Varieties isogenous to a product,
via the Profinite completion of the  Fundamental group}
\author{by Fabrizio Catanese,}
\author{with Appendix by Pavel Zalesskii}
\date{\today}

\address{Fabrizio Catanese,
 Mathematisches Institut der Universit\"{a}t
Bayreuth, NW II\\ Universit\"{a}tsstr. 30,
95447 Bayreuth, Germany.}
\email{Fabrizio.Catanese@uni-bayreuth.de}
\address{Department of Mathematics, University of Brasília,
70910-900, Bras\'ilia-DF, Brazil}
\email{zalesskii@gmail.com}

\thanks{AMS Classification:  14F45; 14G32; 14M99; 32J27; 32Q15;
32Q55; 55R05.\\ 
Key words: Varieties  isogenous to a product, irrational pencils.\\ }

\maketitle

\begin{abstract}
We give a very short proof of two  Theorems, whose content is outlined in the title, and where $\Pi_g$ is the fundamental group of
a compact complex curve of genus $g$:

(1) Theorem \ref{PIP} of the irrational pencil in the profinite version, 
saying that for a compact K\"ahler manifold an irrational pencil,
that is, a fibration onto a curve of genus $g \geq 2$, corresponds to a surjection of the profinite completion
$\widehat{\pi}_1(X) \ra \widehat{\Pi_g}$, which satisfies a maximality property;

(2)  Theorem \ref{PVIP} on the characterization of varieties isogenous to a product, 
profinite version, giving in particular a criterion for $X$
a compact K\"ahler manifold to be isomorphic to a product of curves of genera at least 2: if and only if 
$\widehat{\pi}_1(X) \cong \prod_1^n  \widehat{\Pi_{g_i}}$,
and some volume or cohomological condition is satisfied.

Theorem \ref{PVIP} yields a stronger result  than
the Main Theorem A of  \cite{authors}.

\end{abstract}

\section{Introduction}

\begin{defin}Let $X$ be a smooth projective variety.
$X$ is said to be a {\bf Variety isogenous to a  higher product}, or a VIP,  see \cite{isogenous},   if
$$W \cong (C_1 \times C_2 \times \dots C_n)/G,$$
where the $C_i$'s are  smooth curves of genus  $g(C_i)= : g_i \ge 2$ and $G$ is a finite group acting  freely on
the product variety $(C_1 \times C_2 \times \dots C_n)$.

\end{defin}

In \cite{isogenous} the following was proved in theorems 7.1 , 7.5, 7.7:

\begin{theorem}\label{isogenous-char}
Let $X$ be a compact K\"ahler manifold of dimension $n$ such that 

(i)  $\pi_1(X) \cong \Ga$ 
admits an index $d$ subgroup $\Ga'$ such that
$$\Ga'  
\cong \Pi_{g_1} \times \dots \Pi_{g_n } ,$$
where $\Pi_g$ is the fundamental group of a compact complex curve of genus $g$, and all $g_j \geq 2$, 

(ii) $H^{2n}(X, \ZZ)$ has index $d$ inside the image of $H^{2n}(\Ga', \ZZ) $ in $H^{2n}(X', \ZZ)$,
$X' \ra X$ being the unramified covering associated to $\Ga'$
(i.e.,  $H^{2n}(\Ga', \ZZ) = H^{2n}(X', \ZZ)$).

Then $X$ is the blow up of a VIP  $W$, and $X$ is   isogenous to a product ($X \cong W $)
if $K_X$ is ample.

Alternatively, $X$ is a VIP if 

(i), (ii bis) and (iii) hold true, where

(ii bis) the image of $H^{2n}(\Ga', \ZZ) $ inside  $H^{2n}(X', \ZZ)$ is nonzero, 

(iii) $K_X^n =  \frac{1}{d} 2^n n! \prod_j (g_j-1)$.

Alternatively, $X$ is a VIP if $K_X$ is ample, and (i), (ii') hold true,
where

(ii') $H^{2n}(X, \ZZ) \cong H^{2n}(\Ga, \ZZ)$.
\end{theorem}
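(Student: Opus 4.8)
The plan is to pass to the finite \'etale cover cut out by $\Gamma'$, to recognise it there, through the theory of irrational pencils, as a modification of a product of curves, and finally to descend along the deck group; the three cohomological/numerical hypotheses will enter only to control one integer, the degree of the resulting comparison map. Concretely, let $\rho\colon X'\to X$ be the $d$-sheeted unramified cover associated to $\Gamma'\cong\Pi_{g_1}\times\dots\times\Pi_{g_n}$, so $X'$ is a compact K\"ahler $n$-fold with $\pi_1(X')\cong\Pi_{g_1}\times\dots\times\Pi_{g_n}$ carrying a free action of $G:=\Gamma/\Gamma'$ with quotient $X$. Let $p_i\colon\pi_1(X')\to\Pi_{g_i}$ be the $i$-th projection; its kernel $\prod_{j\ne i}\Pi_{g_j}$ is finitely generated. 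By the structure theorem for irrational pencils on compact K\"ahler manifolds (Castelnuovo--de Franchis in the isotropic-subspace formulation, together with the theorems of Siu and Beauville), a surjection onto $\Pi_g$ with $g\ge 2$ and finitely generated kernel is induced, up to an automorphism of $\Pi_g$, by a holomorphic fibration with connected fibres onto a smooth curve of genus $g$; applied to each $p_i$ this produces holomorphic fibrations $f_i\colon X'\to C_i$ onto smooth curves of genus $g_i$ with $(f_i)_*=p_i$ up to an automorphism of $\Pi_{g_i}$.

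Set $Z:=C_1\times\dots\times C_n$ and $F:=(f_1,\dots,f_n)\colon X'\to Z$. Then $F_*\colon\pi_1(X')\to\pi_1(Z)$ is an isomorphism, and since $Z$ is a $K(\Gamma',1)$ the map $F$ is homotopic to the classifying map $X'\to B\Gamma'$; in particular the induced map $F^*\colon H^{2n}(\Gamma',\ZZ)=H^{2n}(Z,\ZZ)\to H^{2n}(X',\ZZ)$ is exactly the homomorphism whose image occurs in hypotheses (ii) and (ii bis). Both sides are infinite cyclic, generated by the fundamental coclasses $\mu_Z,\mu_{X'}$; writing $F^*\mu_Z=\delta\,\mu_{X'}$, the integer $\delta\ge 0$ is the topological degree of $F$, and $\delta\ge 1$ precisely when $F$ is surjective (equivalently generically finite, as $\dim X'=\dim Z$). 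The hypotheses now pin $\delta$ down, using that pullback along a $d$-sheeted covering of closed oriented $2n$-manifolds is multiplication by $d$ on $H^{2n}$: hypothesis (ii), namely that $H^{2n}(X,\ZZ)$ — i.e.\ the subgroup $\rho^*H^{2n}(X,\ZZ)=d\,H^{2n}(X',\ZZ)$ — has index $d$ inside $\im F^*=\delta\,H^{2n}(X',\ZZ)$, forces $\delta=1$; hypothesis (ii'), via the commuting square relating $\rho^*$, $F^*$ and the classifying maps of $X$ and $X'$, forces $\delta=1$ as well; hypothesis (ii bis) gives only $\delta\ge 1$. Thus in every case $F$ is surjective and generically finite, of degree $1$ under (ii) or (ii').

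When only (ii bis) is available, I would then bring in (iii). Since $\deg K_{C_i}=2g_i-2$ one gets $K_Z^{\,n}=n!\,2^n\prod_j(g_j-1)$, while $K_{X'}^{\,n}=d\,K_X^{\,n}=n!\,2^n\prod_j(g_j-1)$ by (iii), so $K_{X'}^{\,n}=K_Z^{\,n}$. Writing $K_{X'}=F^*K_Z+R$ with $R\ge 0$ the ramification divisor of the generically finite morphism $F$, and using that $F^*K_Z$ is nef and big with $(F^*K_Z)^n=\delta\,K_Z^{\,n}$, a positivity estimate for $(F^*K_Z+R)^n$ forces $\delta=1$ and $R=0$, i.e.\ that $F$ is an isomorphism. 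In all cases we now have a birational morphism $F\colon X'\to Z=C_1\times\dots\times C_n$ of smooth projective varieties — an isomorphism under (iii) — so $X'$ is a blow-up of $Z$ with exceptional locus mapping to a subvariety of codimension $\ge 2$; in particular $X'$, and hence $X$, is projective. To descend, observe that $G$ acts biholomorphically on $X'$, hence on $\pi_1(X')$; since the direct-product decomposition $\Pi_{g_1}\times\dots\times\Pi_{g_n}$ of $\pi_1(X')$ into freely and directly indecomposable, centreless surface groups is canonical up to a permutation of factors of equal genus, $G$ permutes the subgroups $\ker p_i$, hence permutes the fibrations $f_i$, hence acts on $Z=\prod_iC_i$ compatibly, making $F$ $G$-equivariant. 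The induced $G$-action on $Z$ is free: a fixed point of $1\ne g\in G$ would give a $g$-stable connected fibre of $F$ inside $X'$, whose projective geometry produces a fixed point of $g$ on $X'$, contradicting freeness of the deck action. Hence $W:=Z/G=(C_1\times\dots\times C_n)/G$ is a VIP, $X=X'/G$ is a blow-up of $W$, and it is isomorphic to $W$ when (iii) holds. Finally, if $K_X$ is ample then $X$ is a minimal model, so the birational morphism $X\to W$ onto the smooth variety $W$ can contract nothing (by the negativity lemma a curve in a contracted fibre is $K_X$-negative), whence $X\cong W$ is a VIP.

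The main obstacle is extracting $\deg F=1$ from the numerical hypothesis: for (ii) and (ii') this is a clean bookkeeping of covering multiplicities on top cohomology, but for (ii bis)$+$(iii) it requires the delicate positivity estimate for $(F^*K_Z+R)^n$ that rules out simultaneously $\deg F\ge 2$ and every nontrivial blow-up. Secondary points needing care are the equivariance and freeness of the induced $G$-action on $Z$, which rest on the essential uniqueness of the product decomposition of $\pi_1(X')$, and the initial input from the theory of irrational pencils, itself a substantial theorem rather than a routine fact.
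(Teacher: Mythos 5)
This statement is not proved in the present paper at all: it is quoted verbatim from \cite{isogenous} (Theorems 7.1, 7.5, 7.7), and the text only remarks that ``the heart of the proof boils down to treating the case $\Ga'=\Ga$.'' So there is no in-paper proof to compare against; what you have written is a reconstruction of the argument of the cited reference, and in outline it is the right one: pass to $X'$, use the irrational-pencil theorem (surjection onto $\Pi_{g_i}$ with finitely generated kernel $\Rightarrow$ fibration onto a curve of genus exactly $g_i$) to build $F\colon X'\to Z=\prod C_i$ inducing an isomorphism on $\pi_1$, identify $F^*$ on $H^{2n}$ with the classifying-map homomorphism to read off $\deg F$ from (ii)/(ii bis)/(ii'), use (iii) to force $F$ to be an isomorphism, and descend the $G$-action.

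Three places in your sketch are asserted rather than proved, and as stated they would not survive scrutiny. First, the inequality $(F^*K_Z+R)^n\ge (F^*K_Z)^n$ for $R$ effective is not automatic: expanding $K_{X'}^n-(F^*K_Z)^n=R\cdot\sum_k K_{X'}^{\,k}(F^*K_Z)^{n-1-k}$ requires nefness of $K_{X'}$, which you do not know a priori; one has to argue differently (e.g.\ via $F_*$ and the ampleness of $K_Z$) to get $K_{X'}^n\ge \delta K_Z^n$ with equality only if $R=0$ and $\delta=1$. Second, your freeness argument for the induced $G$-action on $Z$ --- ``a $g$-stable connected fibre \dots whose projective geometry produces a fixed point of $g$ on $X'$'' --- is not an argument: an automorphism stabilizing a subvariety need not fix a point of it. What saves the day is that the positive-dimensional fibres of the birational morphism $F$ are rationally chain connected (or, alternatively, one compares $\pi_1(X)$ with $\pi_1(Z/G)$ through the birational morphism $X\to Z/G$ to kill all stabilizers); either way this needs to be said. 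Third, in the (ii$'$) case the ``commuting square'' does not immediately force $\delta=1$: the restriction $H^{2n}(\Ga,\ZZ)\to H^{2n}(\Ga',\ZZ)\cong\ZZ$ is multiplication by some divisor $e'$ of $d$ with $e'\delta=d$, so without pinning down $e'$ you only get $\delta\mid d$; this is precisely why that variant of the theorem carries the extra hypothesis that $K_X$ is ample. With these repairs your sketch matches the proof in \cite{isogenous}.
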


\begin{remark}
Conditions (ii), (ii')  are verified  if $X$ is aspherical, that is, $X$ is a $ K(\Ga,1)$ space.
\end{remark} 

The heart of the proof boils down to treating the case where $\Ga' = \Ga$, 
showing that then $X$ is biholomorphic to a product of curves.

The aim of this short note is to give a short proof of the following result. It was  proven in  \cite{authors} as Theorem A using  assumption (I), and 
the conditions: $\pi_1(X)$ is residually finite,  and 
$X$ is aspherical.

\begin{theorem}\label{PVIP}
In Theorem \ref{isogenous-char} one obtains the same conclusions  replacing

(I)  condition (i) by condition (i-alg) $$\widehat{\Ga' } 
\cong \widehat{\Pi}_{g_1} \times \dots \widehat{\Pi}_{g_n } ,$$
where  $\widehat{\Ga}$ denotes the profinite completion of $\Ga$ ($\widehat{\Ga'} = \widehat{\pi_1(X')}$ may be called  
 the algebraic  fundamental group of $X'$ and be denoted $ \pi_1^{alg} (X') $),
 
 (II) adding  in conditions (ii), respectively (ii bis), (ii'), that $\Ga$ is residually finite.
\end{theorem}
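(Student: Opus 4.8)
\emph{Plan of proof.} The strategy is to produce the curves $C_i$ directly from the profinite hypothesis, via Theorem~\ref{PIP}, and then to re-run the arguments of \cite{isogenous}. First I would carry out exactly the reduction used there: passing to the unramified covering $X'\to X$ with $\pi_1(X')=\Gamma'$, it suffices to treat the case $d=1$, $\Gamma'=\Gamma$, $X'=X$, and to show that then $X$ is biholomorphic to a product $C_1\times\cdots\times C_n$ of curves of genera $g_i\ge 2$; the general statements (blow-up of a VIP, and the descent along the free $G=\Gamma/\Gamma'$-action) then follow by the same bookkeeping, now using hypothesis (II). So assume $\phi\colon\widehat{\pi_1(X)}\xrightarrow{\ \sim\ }\prod_{i=1}^n\widehat{\Pi}_{g_i}$ with $\pi_1(X)$ residually finite.

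For each $i$ put $\bar p_i:=\pr_i\circ\phi\colon\widehat{\pi_1(X)}\twoheadrightarrow\widehat{\Pi}_{g_i}$. One first checks that $\bar p_i$ enjoys the maximality property of Theorem~\ref{PIP}: its kernel is $\phi^{-1}\bigl(\prod_{j\ne i}\widehat{\Pi}_{g_j}\bigr)$, a closed normal subgroup with quotient exactly $\widehat{\Pi}_{g_i}$, and since $g_i\ge 2$ no genus-raising factorisation is possible (a surjection $\widehat{\Pi}_{g_i}\twoheadrightarrow\widehat{\Pi}_h$ forces $h\le g_i$ by the Euler-characteristic/Poincar\'e-duality constraint); a group-theoretic lemma of the kind in the Appendix may be invoked here. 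Theorem~\ref{PIP} then furnishes an irrational pencil $f_i\colon X\to C_i$ with $g(C_i)=g_i$ inducing $\bar p_i$, i.e. $\widehat{(f_i)_*}=\bar p_i$. Set $Z:=C_1\times\cdots\times C_n$ and $F:=(f_1,\dots,f_n)\colon X\to Z$, a holomorphic map of compact complex manifolds of equal dimension $n$. Since $\pr_i\circ F_*=(f_i)_*$ for all $i$, we get $\pr_i\circ\widehat{F_*}=\bar p_i=\pr_i\circ\phi$, and a homomorphism into a product is determined by its components, so $\widehat{F_*}=\phi$ is an isomorphism. As $\pi_1(X)$ is residually finite, $F_*\colon\pi_1(X)\to\pi_1(Z)$ is injective; it is in fact an isomorphism on $H_1(-,\ZZ)$ (because $F_*\otimes\widehat{\ZZ}$ is), hence an isomorphism of integral Hodge structures on $H^1$, so that $\Alb(X)\cong\prod_i\Jac(C_i)$ compatibly with $F$. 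Moreover $F$ induces a degree-preserving bijection between finite \'etale covers of $X$ and of $Z$.

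The crucial point is to upgrade ``$\widehat{F_*}$ is an isomorphism'' to ``$F$ is surjective'' (equivalently generically finite, as $\dim X=\dim Z$). Since $C_i=K(\Pi_{g_i},1)$, for a generator $\omega_i$ of $H^2(C_i,\ZZ)\cong\ZZ$ the class $f_i^*\omega_i\in H^2(X,\ZZ)$ lies in the image of $H^2(\pi_1(X),\ZZ)$, so $F^*(\omega_1\times\cdots\times\omega_n)=f_1^*\omega_1\cup\cdots\cup f_n^*\omega_n$ is the image in $H^{2n}(X,\ZZ)$ of $\alpha:=F_*^*(\omega_1\times\cdots\times\omega_n)\in H^{2n}(\pi_1(X),\ZZ)$. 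Now $\prod_i\Pi_{g_i}$ is good in the sense of Serre and a Poincar\'e-duality group of dimension $2n$, so $H^{2n}(\prod_i\Pi_{g_i},\ZZ/p)\cong H^{2n}(\prod_i\widehat{\Pi}_{g_i},\ZZ/p)\cong\ZZ/p$ for every prime $p$, and through $\widehat{F_*}$ a generator pulls back to a generator of $H^{2n}(\widehat{\pi_1(X)},\ZZ/p)$. Hypothesis (ii) (respectively (ii~bis), (ii')), together with the residual finiteness of $\Gamma$, is exactly what is needed to conclude that this class persists, still as a generator, in $H^{2n}(X,\ZZ/p)$ --- that is, $F^*(\omega_1\times\cdots\times\omega_n)\ne 0$ --- whence $F$ is surjective and generically finite; write $\delta=\deg F\ge 1$. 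I expect this to be the main obstacle: residual finiteness of $\pi_1(X)$ alone does \emph{not} make $H^*(\widehat{\pi_1(X)})\to H^*(\pi_1(X))$ an isomorphism (that would be Serre's goodness of $\Gamma$, which is not hypothesised), so the argument must be routed through goodness of the \emph{target} $\prod_i\Pi_{g_i}$ and must use the top-degree hypothesis to carry the fundamental class across; this is the one place where (ii)/(ii~bis)/(ii') is genuinely used.

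Finally, $X$ carries a generically finite dominant holomorphic map $F$ to $Z=C_1\times\cdots\times C_n$, a smooth projective variety with ample canonical bundle, and $\widehat{F_*}$ is an isomorphism. A finite-index subgroup that is dense in the profinite topology is everything, so $F$ cannot be \'etale of degree $\ge 2$; hence if $\delta\ge 2$ the ramification divisor $R$ in $K_X=F^*K_Z+R$ is nonzero and, when $K_X$ is ample, $K_X^n>(F^*K_Z)^n=\delta\,K_Z^n>K_Z^n$. Comparing with the numerical identity (iii), where $2^n n!\prod_j(g_j-1)=K_Z^n$, or running the cohomological bookkeeping of (ii) and (ii') as in \cite{isogenous}, forces $\delta=1$ and $R=0$; thus $F$ is an isomorphism and $X$ is a VIP, and in general $X$ is the blow-up of the VIP $W$ obtained by descent. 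Undoing the reduction of the first paragraph yields the three stated conclusions.
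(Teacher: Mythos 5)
Your proposal follows the same architecture as the paper's proof: reduce to the case $\Gamma'=\Gamma$, apply Theorem \ref{PIP} to each projection $\pr_i\circ\phi$ to obtain pencils $f_i\colon X\to C_i$, assemble $F=(f_1,\dots,f_n)$, use residual finiteness for injectivity of $F_*$ and the cohomological hypotheses for surjectivity of $F$, and finish with the numerical/ampleness bookkeeping of \cite{isogenous}. The one place where your argument has a genuine gap is precisely the step the paper isolates as Step I, namely $g(C_i)=g_i$. Your stated justification --- ``a surjection $\widehat{\Pi}_{g_i}\twoheadrightarrow\widehat{\Pi}_h$ forces $h\le g_i$'' --- addresses the wrong map: what must be excluded is a factorization $\widehat{\pi_1(X)}\twoheadrightarrow\widehat{\Pi}_{g'_i}\twoheadrightarrow\widehat{\Pi}_{g_i}$ with $g'_i>g_i$, and surjections $\widehat{\Pi}_{g'}\twoheadrightarrow\widehat{\Pi}_{g}$ with $g'>g$ exist in abundance, so no contradiction has yet been reached. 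The missing observation is that the product structure supplies a section: composing the inclusion of the $i$-th factor $\widehat{\Pi}_{g_i}\hookrightarrow\prod_j\widehat{\Pi}_{g_j}$ with $\phi^{-1}$, then with $\widehat{\pi_1(f_i)}$ and with the quotient map $\widehat{\Pi}_{g'_i}\twoheadrightarrow\widehat{\Pi}_{g_i}$, gives the identity, so that surjection is a \emph{split} one; only then does the Appendix lemma (Lemma \ref{splitting}: no split surjection $\widehat{\Pi}_{g'}\twoheadrightarrow\widehat{\Pi}_{g}$ for $g'>g$) apply. You gesture at ``a lemma of the kind in the Appendix'', but without exhibiting the splitting the lemma has nothing to bite on. (The paper also gives a second, more elementary route to Step I via the Green--Lazarsfeld jumping loci \cite{gl} and Lemma \ref{sg}.)

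A smaller remark: your detour through Serre goodness of $\prod_i\Pi_{g_i}$ for the surjectivity of $F$ is unnecessary. Once $F_*\colon\pi_1(X)\to\prod_i\Pi_{g_i}$ is known to be an isomorphism of discrete groups, the target $Z=C_1\times\dots\times C_n$ is a $K(\pi,1)$ and $F$ realizes the classifying map, so hypothesis (ii) (resp.\ (ii bis)) is literally the statement that $F^*$ is nonzero on $H^{2n}$, i.e.\ $\deg F\neq 0$; no comparison between $H^*(\widehat{\pi_1(X)})$ and $H^*(\pi_1(X))$ is needed. The rest of your outline (ramification divisor, $K_X^n\ge \delta\,K_Z^n$, comparison with condition (iii)) matches the intended argument.
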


It suffices then to prove Theorem \ref{PVIP} in the case where we assume that 
 $$ \widehat{\pi_1 (X) } \cong {\Pi}_{g_1} \times \dots \widehat{\Pi}_{g_n }.$$

 As in \cite{isogenous} the main point is to show that there exist holomorphic maps $f_i :  X\  \ra C_i$,
 where $C_i$ is a complex curve of genus $g_i$, which induce the 
 i-th homomorphism $ (f_i)_*\colon  \widehat{\pi_1 (X) }   \ra  \widehat{\pi_1 (C_i) } \cong \widehat{\Pi}_{g_i}$.
 
 This will be done in the next section, in Theorem \ref{PIP},
 and is based on earlier results about irrational pencils, contained
 in \cite{alb-gen}, \cite{gl}.
 
 We shall then conclude the proof of Theorem \ref{PVIP} in the final section.
 
 The arguments used concern properties of the fundamental
 groups $\Pi_g$,  of  their profinite completions,
 and especially non splitting properties of surjections between them.
 
 One such result is due to Pavel Zalesskii, and is proven in the
 appendix.

\section{Irrational pencils Theorem via  the algebraic fundamental group}

\begin{theorem}\label{PIP}
Let $X$ be a compact K\"ahler manifold of dimension $n$ and 
$$\psi :  \pi_1^{alg} (X): = \widehat{\pi_1(X) }  \ra  \widehat{\Pi}_{g}$$
be a surjective homomorphism, where $g \geq 2$.

Then there exists a holomorphic map $f: X \ra C$ with connected fibres, where $C$ is a complex curve of genus $ g_2 \geq g$,
such that  $\psi$ factors through the surjection $\widehat{\pi_1(f)}$. And $\psi = \widehat{\pi_1(f)}$ if the homomorphism
$\psi$  is maximal
with respect to such  factorizations  $ \widehat{\pi_1(X)}   \ra  \widehat{\Pi}_{g_2} \ra  \widehat{\Pi}_{g}$.\end{theorem}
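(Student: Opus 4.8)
The plan is to deduce Theorem \ref{PIP} from the classical (topological) irrational pencil theorem by a "profinite approximation" argument. First I would recall the classical statement: if a compact Kähler manifold $X$ admits a surjection $\pi_1(X) \ra \Pi_h$ with $h \geq 2$ (with finitely generated kernel, or—via the Siu--Beauville-type results of \cite{alb-gen}, \cite{gl}—just any surjection onto such a surface group), then there is a holomorphic map $f\colon X \ra C$ with connected fibres onto a curve of genus $g_2 \geq h$ and a surjection $\Pi_{g_2} \twoheadrightarrow \Pi_h$ through which $\pi_1(f)_*$ factors. So the real content is to produce, from the \emph{profinite} surjection $\psi\colon \widehat{\pi_1(X)} \ra \widehat{\Pi}_g$, an honest surjection $\pi_1(X) \ra \Pi_h$ for some $h \geq 2$ compatible with $\psi$, up to passing to the maximal factorization.

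The key steps, in order, would be: (1) Observe that $\psi$ restricts on the (dense) image of $\pi_1(X)$ to a homomorphism $\pi_1(X) \ra \widehat{\Pi}_g$ with dense image; since $\widehat{\Pi}_g$ is finitely generated as a profinite group this does not immediately give a surjection onto $\Pi_g$ itself, but it does give, for every open normal $N \trianglelefteq \widehat{\Pi}_g$, a surjection $\pi_1(X) \ra \widehat{\Pi}_g/N$ onto a finite quotient. (2) Use the structure of finite quotients of surface groups: a theorem of the type "a finite quotient of $\widehat{\Pi}_g$ through which the quotient map does not factor via a smaller-genus surface group detects the fibration" — more precisely, I expect one argues that the cup-product/isotropic-subspace structure on $H^1$ coming from the symplectic form on $\Pi_g$ is preserved profinitely, so that $\psi$ induces on a finite-index piece of $H^1(X,\ZZ/\ell)$ an isotropic subspace of dimension $2g$, and Catanese's criterion (generically defined irrational pencils, \cite{alb-gen}) produces the holomorphic $f$. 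Concretely: pull back $H^1(\widehat{\Pi}_g, \ZZ/\ell) = H^1(\Pi_g,\ZZ/\ell)$ via $\psi$ to get a $2g$-dimensional isotropic subspace $V \subseteq H^1(X,\ZZ/\ell)$ on which the cup product vanishes; by the isotropic subspace theorem this is realized by a holomorphic map to a curve. (3) Having produced $f\colon X \ra C$, functoriality of profinite completion gives $\widehat{\pi_1(f)}\colon \widehat{\pi_1(X)} \ra \widehat{\pi_1(C)} = \widehat{\Pi}_{g_2}$, and one checks $\psi$ factors through it by verifying the two maps to each finite quotient of $\widehat{\Pi}_g$ agree — this uses that $C$ was chosen so that its genus-$g_2$ surface group surjects onto $\Pi_g$ compatibly. (4) For the final sentence, if $\psi$ is maximal among factorizations $\widehat{\pi_1(X)} \ra \widehat{\Pi}_{g_2} \ra \widehat{\Pi}_g$, then $g_2 = g$ is forced and the intermediate surjection $\widehat{\Pi}_{g_2} \ra \widehat{\Pi}_g$ must be an isomorphism; here is where the appendix result of Zalesskii on non-splitting/rigidity of surjections between profinite surface groups enters — a surjection $\widehat{\Pi}_{g_2} \twoheadrightarrow \widehat{\Pi}_g$ with $g_2 \geq g \geq 2$ that is not an isomorphism strictly decreases genus after a finite-index refinement, contradicting maximality, so $\psi = \widehat{\pi_1(f)}$.

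The main obstacle is step (2): passing from "dense image $\pi_1(X) \ra \widehat{\Pi}_g$" to "an actual holomorphic fibration." The subtlety is that a priori $\psi$ only sees \emph{finite} quotients, so one cannot directly apply the topological irrational pencil theorem to $\pi_1(X) \ra \Pi_g$ (there may be no such surjection at all). The resolution I would pursue is to work level by level with the mod-$\ell$ cohomology and the isotropic subspace theorem — which is inherently a \emph{finite}-coefficient statement and hence compatible with profinite completion (since $H^*(\widehat{\Pi}_g, \ZZ/\ell) \cong H^*(\Pi_g,\ZZ/\ell)$ by good cohomology / Serre's result on surface groups being good) — and to note that the curve $C$ and the genus $g_2$ produced by that theorem are then automatically compatible with \emph{all} the finite quotients of $\widehat{\Pi}_g$ seen through $\psi$, because two holomorphic fibrations inducing the same map on enough finite quotients coincide (connectedness of fibres plus Stein factorization rigidity). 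A secondary obstacle is ensuring $g_2 \geq g$ rather than merely $g_2 \geq 2$: this follows because the map $\widehat{\Pi}_{g_2} \ra \widehat{\Pi}_g$ through which $\psi$ factors is surjective, and a surjection of profinite surface groups cannot raise genus (abelianization: $\widehat{\ZZ}^{2g_2} \twoheadrightarrow \widehat{\ZZ}^{2g}$ forces $g_2 \geq g$).
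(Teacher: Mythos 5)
There is a genuine gap at your step (2), which is the heart of the matter. The isotropic subspace theorem of \cite{alb-gen} is a characteristic-zero statement: it concerns subspaces of $H^0(\Omega^1_X)$ (or of $H^1(X,\CC)$) on which the wedge/cup product vanishes, and its proof (Castelnuovo--de Franchis type integration of closed holomorphic forms) has no analogue with $\ZZ/\ell$ coefficients. Saying it is ``inherently a finite-coefficient statement'' gets this backwards. Moreover, even granting such a version, the subspace $\psi^*H^1(\Pi_g,\ZZ/\ell)\subseteq H^1(X,\ZZ/\ell)$ you propose to use is not isotropic: the cup product on $H^1(\Pi_g,\ZZ/\ell)$ is a nondegenerate symplectic form (since $H^2(\Pi_g,\ZZ/\ell)\cong\ZZ/\ell$), and there is no reason for its image in $H^2(X,\ZZ/\ell)$ to vanish. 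So the proposed mechanism for converting the profinite surjection into a holomorphic pencil does not function, and this was exactly the obstacle you correctly identified in your last paragraph.

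The paper resolves the difficulty differently, by adapting Beauville's counting argument from the appendix to \cite{alb-gen}. For each large prime $p$ the composite $\widehat{\pi_1(X)}\twoheadrightarrow\widehat{\Pi}_g\twoheadrightarrow(\ZZ/p)^{2g}$ descends (Lemma \ref{1}) to a surjection $\pi_1(X)\twoheadrightarrow(\ZZ/p)^{2g}$, giving $1+p+\dots+p^{2g-1}$ epimorphisms $\eta\colon\pi_1(X)\twoheadrightarrow\ZZ/p$, i.e.\ that many $p$-torsion points of $\Pic^0(X)$. By Lemma \ref{2} the completion of $\ker\eta$ surjects onto $\widehat{\Pi}_{p(g-1)+1}$ (the kernel of $\widehat{\Pi}_g\twoheadrightarrow\ZZ/p$), and by Lemma \ref{4} the irregularity of a finite cover is readable from the profinite completion, so the corresponding cyclic covers all have irregularity at least $p(g-1)+1>q(X)$ for $p\gg0$. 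This produces at least $p^{2g-1}$ ``special'' $p$-torsion points, and Beauville's argument (via the Green--Lazarsfeld structure of the jumping loci \cite{gl}) then yields the pencil of genus $g_2\ge g$. In short: the finite-coefficient information is injected through torsion points of $\Pic^0(X)$ and the irregularity of finite covers, not through a mod-$\ell$ cup-product criterion. Your steps (1), (3) and the abelianization argument for $g_2\ge g$ are fine, and the maximality clause is essentially immediate from the factorization once the pencil exists (the Zalesskii lemma is used later, in Step I of Theorem \ref{PVIP}, not here).
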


\begin{remark}
The converse assertion clearly holds, such a fibration $f$ yields
a surjection  $\widehat{\pi_1(f)}$. 
\end{remark}

The shortest proof we can give is following the arguments of Beauville in the Appendix to 
\cite{alb-gen}.
For the sake of clarity we first discuss  some easy results concerning profinite completions.

\subsection{Profinite completion} Let $A$ be a group and let $\widehat{A}$ be its profinite completion,
that is,  as a set, the  inverse limit:
\begin{equation}\label{inverse}
 \widehat{A} = \varprojlim_{(H < _{fi} A)}  (A/H),
 \end{equation}
where $H <_{fi}$ means that $H$ is a  subgroup of finite index.

Since every subgroup $H$ of finite index contains a normal subgroup 
of finite index $H'$, then
$$ \widehat{A} = \varprojlim_{(H \lhd _{fi} A)}  (A/H),$$
where $\lhd _{fi} $ means that we have a normal subgroup of finite index.

Hence $ \widehat{A}$ is a group.

Given a finite  group $B$ and applying $Hom$ we get a direct limit
$$ Hom (\widehat{A} , B) = \varinjlim_{(H \lhd_{fi} A)}  Hom(A/H, B).$$

The above follows from the universal property of the profinite completion, from which essentially follows  also the following result.

\begin{lemma}\label{1}
Let $A,B$ be groups and assume that we have a surjection $\eta : \widehat{A} \twoheadrightarrow \widehat{B}$.

Then, for each finite quotient $ \phi : B \twoheadrightarrow G$, there is a surjection $\psi : A \twoheadrightarrow G$
such that  $ \widehat{\psi} = \widehat{\phi} \circ \eta$.

\end{lemma}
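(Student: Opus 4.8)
The plan is to deduce the statement purely formally from the universal property of the profinite completion, using only that a finite group $G$ satisfies $\widehat{G}=G$ and that the canonical map $A\to\widehat{A}$ has dense image.

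First I would manufacture a continuous surjection $\widehat{A}\twoheadrightarrow G$. The finite quotient $\phi\colon B\twoheadrightarrow G$ extends, by the universal property, to a continuous homomorphism $\widehat{\phi}\colon\widehat{B}\to\widehat{G}=G$, and $\widehat{\phi}$ is surjective because its image already contains $\phi(B)=G$. Composing with the hypothesised $\eta$ gives a continuous surjective homomorphism $\chi:=\widehat{\phi}\circ\eta\colon\widehat{A}\twoheadrightarrow G$. Next I would pull $\chi$ back to $A$: set $\psi:=\chi\circ\iota_A\colon A\to G$, where $\iota_A\colon A\to\widehat{A}$ is the canonical map; equivalently, via $\Hom(\widehat{A},G)=\varinjlim_{H\lhd_{fi}A}\Hom(A/H,G)$ as in \eqref{inverse}, $\chi$ corresponds to a homomorphism $A/H\to G$ for some finite-index normal $H\lhd A$, and $\psi$ is its composition with $A\twoheadrightarrow A/H$. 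Surjectivity of $\psi$ is immediate: $\iota_A(A)$ is dense in $\widehat{A}$ and $\chi$ is continuous, so $\psi(A)=\chi(\iota_A(A))$ is dense in $G$; since $G$ is finite and discrete, $\psi(A)=G$.

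Finally I would check $\widehat{\psi}=\widehat{\phi}\circ\eta$. Both $\widehat{\psi}\colon\widehat{A}\to\widehat{G}=G$ and $\chi\colon\widehat{A}\to G$ are continuous homomorphisms whose restriction along $\iota_A$ equals $\psi$ — for $\widehat{\psi}$ this is the definition of the completion of $\psi$, and for $\chi$ it is the construction of $\psi$. By the uniqueness clause in the universal property of $\widehat{A}$ they coincide, giving $\widehat{\psi}=\widehat{\phi}\circ\eta$.

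I do not expect a real obstacle here: the lemma is a formal consequence of the universal property, and the argument is the one used by Beauville in this setting. The only point deserving a word of care is continuity — one uses that $\eta$ is a morphism of profinite groups and that homomorphisms from $A$ (or from the finite quotients $A/H$) into a finite group extend continuously to $\widehat{A}$; for the finitely generated groups relevant to this paper every finite-index subgroup is open (Nikolov–Segal), so no distinction between abstract and continuous homomorphisms intervenes.
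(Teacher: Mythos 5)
Your proof is correct and follows essentially the same route as the paper: both define $\psi$ as the composite $A\to\widehat{A}\xrightarrow{\eta}\widehat{B}\xrightarrow{\widehat{\phi}}G$ and then verify surjectivity, the only cosmetic difference being that you use density of $\iota_A(A)$ in $\widehat{A}$ plus continuity where the paper factors $\widehat{\psi}$ through a finite quotient $A/H$ with $A/H\to G$ surjective. Your explicit check that $\widehat{\psi}=\widehat{\phi}\circ\eta$ via the uniqueness clause is a welcome addition the paper leaves implicit.
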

\begin{proof}
We define $\psi$ through the composition $ A \ra  \widehat{A} \ra \widehat{B}   \ra G$,
the last homomorphism being $\widehat{\phi}$.

$\psi$ factors through $ A \ra A/K$, $ K : = ker (\psi)$, and it suffices to show that $\psi$ is surjective.

$ A \ra G$ induces $  \widehat{\psi} :  \widehat{A}    \ra G$, which is surjective since $\eta : \widehat{A} \ra \widehat{B}$
and $ \widehat{\phi} :  \widehat{B} \ra G$ are both surjective.

Now, we have a homomorphism $ A/H \ra G$ iff $ H < K$, and, by the surjectivity of $  \widehat{\psi} $,
we can  find such a finite index normal subgroup such that $ A/H \ra G$ is surjective.

Since the composition  $ A/H \ra A/ K  \ra G$ is surjective, then $A/K \cong G$, 
and we are done.

\end{proof}

\begin{lemma}\label{2}

With the notation of Lemma \ref{1}, if $\widehat{A} \ra G$ is onto, then its kernel is isomorphic to $\widehat{K}$.
\end{lemma}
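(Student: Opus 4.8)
The plan is to exploit the fact, established in the proof of Lemma \ref{1}, that the surjection $\widehat{A}\ra G$ arises from an honest surjection $\psi\colon A\twoheadrightarrow G$ of abstract groups, with $K=\ker(\psi)$. So we have a short exact sequence of abstract groups
\[
1\ra K\ra A\ra G\ra 1,
\]
and we want to identify the kernel of the induced map on profinite completions $\widehat{A}\ra \widehat{G}=G$ (the last equality because $G$ is finite). First I would recall the general fact that profinite completion is right exact on short exact sequences of groups: if $1\ra K\ra A\ra G\ra 1$ is exact then $\widehat{K}\ra\widehat{A}\ra\widehat{G}\ra 1$ is exact, the surjectivity on the right being immediate and the exactness in the middle following because the finite quotients of $A$ that factor through $G$ are cofinal among those killing the image of $K$. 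The only subtle point is \emph{injectivity} of $\widehat{K}\ra\widehat{A}$, i.e.\ that $\widehat{K}$ maps isomorphically onto its image (equivalently, that the subgroup topology $K$ inherits from $A$ coincides with its own profinite topology).

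The key step is therefore: since $G$ is \emph{finite}, $K$ has finite index in $A$, and a finite-index subgroup of any group inherits exactly its full profinite topology from the ambient group. Concretely, every finite-index subgroup $L<K$ is also of finite index in $A$, and conversely every finite-index normal subgroup $N\lhd A$ meets $K$ in a finite-index subgroup $N\cap K\lhd K$; these two families are cofinal in each other, so $\varprojlim_{L<_{fi}K}K/L \cong \varprojlim_{N\lhd_{fi}A}K/(N\cap K)$, which is precisely the closure of the image of $K$ in $\widehat{A}$. This gives the desired topological embedding $\widehat{K}\hookrightarrow\widehat{A}$ and identifies $\widehat K$ with the kernel of $\widehat A\ra G$.

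To finish, I would assemble the exact sequence $1\ra\widehat{K}\ra\widehat{A}\ra G\ra 1$ from the two ingredients above: right-exactness of completion gives surjectivity onto $G$ with kernel the closure of the image of $K$, and the finite-index argument identifies that closure with $\widehat{K}$. The main (and essentially only) obstacle is the injectivity/topology-matching step; everything else is formal from the universal property of profinite completion as recorded in the discussion preceding Lemma \ref{1}. I would remark that finiteness of $G$ is essential here — for a general quotient $G$ one only gets $\widehat K\ra\widehat A$ with image the kernel of $\widehat A\to\widehat G$, but this need not be injective.
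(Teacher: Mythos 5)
Your proposal is correct and follows essentially the same route as the paper: the paper's proof also identifies the kernel with $\varprojlim K/H$ over finite-index subgroups $H$ of $K$ that are normal in $A$, and then invokes exactly the cofinality you describe (every finite-index subgroup of $K$ has finite index in $A$, hence contains a finite-index subgroup normal in $A$), which is where the finiteness of $G$ enters. Your write-up just makes the injectivity/topology-matching point more explicit than the paper's three-line argument.
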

\begin{proof}
Assume that $H < K$:  then the cosets $K/H$ map to the identity
in $A/K = G$.  If moreover $H \lhd A$, then $A/H \twoheadrightarrow A/K = G$.

Then the elements in the Kernel come from $K/H$, with $H$ 
normal of finite index in $K$.

But every finite index subgroup $H$ of $K$ contains a finite index subgroup $H'$ which is normal in $A$.
\end{proof}

For the next lemma, assume that $M$ is a finitely generated abelian group $ M \cong \ZZ^b \oplus T$,
where $T$ is a finite abelian group.

\begin{defin}
For $p$ a prime number, let $M_p : = M / p M$  
and observe that, for $ p >> 0$, $M_p = (\ZZ/p)^b$.
\end{defin} 

\begin{lemma}\label{3}

 Let $A^{ab} : = A / [A,A]$ be the abelianization of $A$, and assume that
 $A^{ab} $ is finitely generated.
 
 Then  $\widehat{A} \ra A^{ab}_p$ is surjective.
\end{lemma}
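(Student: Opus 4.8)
The statement is that for a group $A$ with finitely generated abelianization $A^{ab} \cong \ZZ^b \oplus T$, the natural map $\widehat{A} \to A^{ab}_p = A^{ab}/pA^{ab}$ is surjective. The plan is to factor the map through the abelianization and use the universal property of the profinite completion.

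First I would observe that $A^{ab}_p$ is a finite group: since $A^{ab} \cong \ZZ^b \oplus T$ with $T$ finite, we have $A^{ab}_p \cong (\ZZ/p)^b \oplus (T/pT)$, which is finite. The quotient homomorphism $q: A \to A^{ab} \to A^{ab}_p$ therefore has finite index kernel $H := \ker(q) \lhd_{fi} A$. By the universal property of the profinite completion (equivalently, because $A^{ab}_p$ appears among the finite quotients $A/H$ in the inverse system defining $\widehat{A}$), the map $q$ extends to a continuous homomorphism $\widehat{q}: \widehat{A} \to A^{ab}_p$, and the diagram
\[
A \longrightarrow \widehat{A} \stackrel{\widehat{q}}{\longrightarrow} A^{ab}_p
\]
commutes with the composite equal to $q$.

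Now surjectivity: the image of $A$ in $\widehat{A}$ is dense, and $q$ itself is already surjective (it is a composition of surjections $A \twoheadrightarrow A^{ab} \twoheadrightarrow A^{ab}/pA^{ab}$). Hence $\widehat{q}(\widehat{A}) \supseteq \widehat{q}(\text{image of } A) = q(A) = A^{ab}_p$, so $\widehat{q}$ is onto. This is essentially the same bookkeeping as in the proof of Lemma \ref{1}: a finite quotient of $A$ factors through $\widehat{A}$, and surjectivity is inherited from the dense image.

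There is no real obstacle here; the only thing to be slightly careful about is the observation that $A^{ab}_p$ is genuinely finite (which uses finite generation of $A^{ab}$ in an essential way — otherwise $A^{ab}_p$ need not be finite and need not be a quotient of $\widehat{A}$ at all), and that the map in question is by definition the one induced by $A \to A^{ab} \to A^{ab}_p$ on profinite completions, restricted along $\widehat{A} \to \widehat{A^{ab}}$. I would phrase the argument in one short paragraph, citing the displayed direct-limit description $\Hom(\widehat{A}, B) = \varinjlim \Hom(A/H, B)$ for finite $B$ to produce $\widehat{q}$, and then deducing surjectivity from the density of the image of $A$.
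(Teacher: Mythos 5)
Your proposal is correct and follows essentially the same route as the paper: the paper's entire proof is the observation that $A^{ab}_p$ is a finite quotient $A/K$ with $K$ of finite index, whence $\widehat{A}$ surjects onto it by the defining inverse-limit description. You simply spell out the finiteness of $A^{ab}_p$ and the density argument in more detail.
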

\begin{proof}
Under our assumption $A^{ab}_p = A /K$, where $K$ is a finite index subgroup of $A$. 
\end{proof}

\begin{lemma}\label{4}

For $X$ a compact K\"ahler manifold, the irregularity $q(X): = h^1(\hol_X)$
can be so described:
$$q : = q(X) = max_{(p \ {\rm prime}, \ p >>0)} \{ r | \pi_1(X) \twoheadrightarrow (\ZZ/p)^{2r}\},$$
$$q : = q(X) = max_{(p \ {\rm prime}, \ p >>0)} \{ r | \widehat{\pi}_1(X) \twoheadrightarrow (\ZZ/p)^{2r}\}.$$
\end{lemma}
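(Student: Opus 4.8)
The plan is to recall first the classical description of the irregularity in terms of the topological fundamental group, and then to upgrade it to the profinite statement via Lemma \ref{3}. Recall that for a compact K\"ahler manifold $X$ one has $H^1(X,\ZZ) \cong \ZZ^{2q}$, where $q = q(X) = h^1(\hol_X)$, because $H^1(X,\CC) = H^{1,0}\oplus H^{0,1}$ has dimension $2q$ by Hodge theory and $H^1(X,\ZZ)$ is a lattice of that rank (there is no torsion in $H^1$ by the universal coefficient theorem, and $H^1(X,\ZZ)$ is torsion free in any case). Since $H^1(X,\ZZ) \cong \Hom(\pi_1(X),\ZZ) \cong \pi_1(X)^{ab}/\Tors$, the abelianization $\pi_1(X)^{ab}$ is finitely generated with free part of rank $2q$. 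This is the input needed to apply the previous lemmas.

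The first step is then: for a finitely generated abelian group $M \cong \ZZ^b \oplus T$, one has $M_p := M/pM \cong (\ZZ/p)^b$ for all primes $p$ larger than the order of $T$, and moreover $b$ is the largest $r$ such that $M \twoheadrightarrow (\ZZ/p)^r$ for such $p$ (any surjection from $M$ to an elementary abelian $p$-group factors through $M_p$ once $p\nmid |T|$). Applying this with $M = \pi_1(X)^{ab}$ and $b = 2q$ gives, for $p \gg 0$, that $2q$ is the maximum $r$ with $\pi_1(X)^{ab} \twoheadrightarrow (\ZZ/p)^{2r}$, which by passing to the abelianization is the same as the maximum $r$ with $\pi_1(X) \twoheadrightarrow (\ZZ/p)^{2r}$. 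This proves the first displayed equality.

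For the second equality, note that a finite group $G$ receives a surjection from $\widehat{\pi}_1(X)$ if and only if it receives one from $\pi_1(X)$, by the universal property of the profinite completion (a continuous surjection $\widehat{\pi}_1(X)\twoheadrightarrow G$ restricts to a surjection $\pi_1(X)\twoheadrightarrow G$ on the dense image, and conversely any surjection $\pi_1(X)\twoheadrightarrow G$ onto a finite group extends uniquely and continuously to $\widehat{\pi}_1(X)$). Applying this to $G = (\ZZ/p)^{2r}$ shows the two maxima coincide; one can alternatively invoke Lemma \ref{3}, which says precisely that $\widehat{\pi}_1(X) \ra \pi_1(X)^{ab}_p$ is surjective, together with the remark that $\widehat{\pi}_1(X)^{ab}_p = \pi_1(X)^{ab}_p$. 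There is no real obstacle here; the only point to be a little careful about is the torsion in $\pi_1(X)^{ab}$, which forces the restriction to $p \gg 0$ (depending on $X$) in both formulas, exactly as in the statement.
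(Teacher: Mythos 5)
Your proposal is correct and follows essentially the same route as the paper: both reduce to the fact that $\pi_1(X)^{ab}=H_1(X,\ZZ)\cong\ZZ^{2q}\oplus T$ with $T$ finite (equivalently, your $H^1$ formulation), observe that torsion dies modulo $p$ for $p\gg 0$, and transfer the statement to $\widehat{\pi}_1(X)$ via the correspondence between finite quotients of a group and of its profinite completion (the paper's Lemma \ref{1}).
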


\begin{proof}
$\pi_1(X) \twoheadrightarrow H_1(X, \ZZ) = \ZZ^{2 q} \oplus T$, where $T$ is the Torsion subgroup,
which is finitely generated.

Hence, for $ p >>0$, $Hom (T, \ZZ/p)=0$.

Hence $ \pi_1(X) \twoheadrightarrow (\ZZ/p)^{2q}$, and every such surjection factors through it.

The second assertion follows from Lemma \ref{1}.
\end{proof}

\subsection{Proof of Theorem \ref{PIP}}

For each prime number $p, p>>0$, we have 
$$ \Pi_g \twoheadrightarrow 
\ZZ^{2g} \twoheadrightarrow (\ZZ/p)^{2g},$$
hence, by Lemma \ref{3}, $ \widehat{\Pi}_g \twoheadrightarrow  (\ZZ/p)^{2g},$
and by Lemma \ref{1} there is a surjection $\phi : \pi_1(X) \twoheadrightarrow  (\ZZ/p)^{2g}.$

Consider now the $\frac{p^{2g} -1}{p-1} = 1 + p + \dots p^{2g-1}$ kernels of epimorphisms 
$\e : (\ZZ/p)^{2g} \twoheadrightarrow \ZZ/p$.

It will suffice, as in the appendix to \cite{alb-gen}, to see that we get in this way at least 
$p^{2g-1}$ special elements of torsion order $p$ in $Pic^0(X)$.

`Special elements' means  that $ker (\eta)$, $\eta := \e \circ \phi$, yields a covering of $X$ with irregularity $ q' > q(X)$.

By Lemma \ref{2} we have that $\widehat{ker \eta}$ is the kernel of $\widehat{\pi}_1(X)  \twoheadrightarrow \ZZ/p$, 
and likewise by Lemma \ref{2} the kernel of $\widehat{\Pi}_g  \twoheadrightarrow \ZZ/p$
is isomorphic to $\widehat{\Pi}_{p (g-1) +1}$.

We observe that  we have a map $\Psi$ from $\widehat{ker \eta}$  to $\widehat{\Pi}_{p (g-1) +1}$, and since 
 by assumption we have a surjection $\widehat{\pi}_1(X)  \twoheadrightarrow \widehat{\Pi}_g$ , 
 it follows that $\Psi$ is surjective.
 
 By Lemma \ref{4}, 
 $$ q' : =   q(X') = max_{(p \ {\rm prime}, \ p >>0)} \{ r | \widehat{ker \eta} \twoheadrightarrow (\ZZ/p)^{2r}\}
 \geq $$
 
 $$ \geq max_{(p \ {\rm prime}, \ p >>0)} \{ r | \widehat{\Pi}_{p (g-1) +1} \twoheadrightarrow (\ZZ/p)^{2r}\} =
 2 (p(g-1) + 1)$$
 
 Hence  $$q' \geq p(g-1) + 1. $$
 
 Since $ g \geq 2$, for $ p >>0$, $q' > q(X)$ and Theorem   \ref{PIP} is proven.
 
 \section{Proof of Theorem \ref{PVIP}}
 
 As already observed, it  suffices  to prove Theorem \ref{PVIP} in the case where we assume that 
 \begin{equation}\label{product}
  \widehat{\pi_1(X) } \cong \widehat{\Pi}_{g_1} \times \dots \widehat{\Pi}_{g_n }.
  \end{equation}
 
 By Theorem \ref{PIP} to each  surjection $\psi_i : \widehat{\pi_1(X) }  \twoheadrightarrow \widehat{\Pi}_{g_i}$
 corresponds an irrational pencil, that is, a holomorphic map $f_i : X \ra C_i$ with connected fibres
 where the genus of $C_i$
 equals $g'_i \geq g_i$. Moreover, $\pi_1(f_i) : \pi_1(X)
 \twoheadrightarrow \Pi_{g'_i}$ is a surjection 
  inducing  a surjection  $\psi'_i : \widehat{\pi_1(X) }  \twoheadrightarrow \widehat{\Pi}_{g'_i}$ such that $\psi_i$ factors through $\psi'_i $.
 
 {\bf Step I: $ g'_i = g_i, \forall i.$}
 
 Assume that $ g'_i > g_i$. Then the projection on the i-th factor of $\widehat{\Pi}_{g_1} \times \dots \widehat{\Pi}_{g_n }$
 factors through $ \widehat{\Pi}_{g'_i}$, and  we would have   homomorphisms 
 $\widehat{\Pi}_{g_i} \ra \widehat{\Pi}_{g'_i}$, and surjections $\widehat{\Pi}_{g'_i} \twoheadrightarrow \widehat{\Pi}_{g_i}$ whose composition is the identity (we shall refer to this situation
 calling it a {\bf splitting surjection}).

  {\bf First proof of Step I :} such a splitting surjection
  cannot exist for $g'_i > g_i$, by the next Lemma \ref{splitting}, proven by Pavel Zalesskii in the appendix.
  
  {\bf Lemma 3.4}
For $g' > g$, there is no splitting surjection

$\widehat{\Pi}_{g'} \twoheadrightarrow \widehat{\Pi}_{g}$.
 
    {\bf Second  proof of Step I :} we shall give another, more elementary proof.
    
  Recall that Green and Lazarsfeld 
  \cite{gl} have proven that the cohomology jumping locus
  $$\{ L \in \Pic(X) | H^1(X,L) \neq 0\}$$ 
  is a finite union of translates of complex subtori of $\Pic(X)$,
  and each irrational pencil $f_i : X \ra C_i$ yields a subtorus $B_i$
  of dimension $g'_i$
  passing through the origin.
  
  By our assumption \eqref{product}, the tangent space to $\Pic(X)$ at the origin
  is a direct sum $\oplus_i V_i$ of complex subspaces of dimension $g_i$ 
  such that $V_i \subset  T_{B_i}$.
  
 Hence either all the tori $B_i$ are distinct, and $g'_i = g_i$,
 or each $B_j$ has tangent space 
 \begin{equation}\label{onto}
 T_{B_j} = \oplus_{i \ {\rm s.t. } \  V_i \subset  T_{B_j}} V_i
 \end{equation}
 at the origin.
 
 Correspondingly, we get a surjection
 
 $$\widehat{ \pi_1(X)}  \cong \widehat{\Pi}_{g_1} \times \dots \widehat{\Pi}_{g_n }
 \twoheadrightarrow \widehat{\Pi}_{g'_j},$$
 which yields for each $B_j$ a surjection 

\begin{equation}\label{split}
  \prod _{i \ {\rm s.t. } \  V_i \subset  T_{B_j}}  \widehat{\Pi}_{g_i}  \twoheadrightarrow \widehat{\Pi}_{g'_j},
  \end{equation}
such that each subproduct does not map surjectively.

Indeed, as we argued before, the projection on the i-th factor of $\widehat{\Pi}_{g_1} \times \dots \times \widehat{\Pi}_{g_n }$
 factors through $ \widehat{\Pi}_{g'_i}$,
 hence the surjection \eqref{split} is a splitting one.
 
 This contradicts part (b) of he following Lemma, hence Step I is established by proving 
 
 \begin{lemma}\label{sg}
 (a) the fundamental group $\Pi_g$ of a compact complex curve
 $C$ of genus $g \geq 2$ cannot be a  nontrivial semidirect
 product  $ K \rtimes H$.
 
 (b) There cannot be a split surjection 
 $$ \prod_1^k  \widehat{\Pi}_{g_i}  \twoheadrightarrow \widehat{\Pi}_{g'},$$
 with $ g'  = g_1 + \dots + g_k$.

 \end{lemma}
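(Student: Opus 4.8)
The plan is to prove (a) first, by a direct group-cohomological argument, and then to deduce (b) from (a) by an induction on the number $k$ of factors, using the profinite-completion analogue of the reasoning in (a). For part (a), suppose for contradiction that $\Pi_g \cong K \rtimes H$ with $K, H$ both nontrivial. Since $\Pi_g$ is torsion-free, $K$ and $H$ are torsion-free. One should first observe that $H$, being a quotient of $\Pi_g$ (the projection $K \rtimes H \to H$) as well as a subgroup of $\Pi_g$, must itself be a surface group or a free group: indeed every subgroup of $\Pi_g$ is either free or again a surface group $\Pi_h$ (by covering space theory, a subgroup corresponds to a covering of $C$, which is either noncompact, hence homotopy equivalent to a wedge of circles, or compact of genus $h$). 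A free nontrivial $H$ is impossible because a surface group $\Pi_g$ with $g\ge 2$ has no nontrivial free quotient that splits off as a semidirect-product complement while leaving a nontrivial normal kernel — more cleanly, one uses that $\Pi_g$ has cohomological dimension $2$, so in a splitting $1 \to K \to \Pi_g \to H \to 1$ we get $\operatorname{cd}(K) + \operatorname{cd}(H) \le 2$ with both $\ge 1$, forcing $\operatorname{cd}(K) = \operatorname{cd}(H) = 1$, hence both $K$ and $H$ are free (by Stallings–Swan). But then $\Pi_g$ would be of cohomological dimension at most $2$ \emph{and} an extension of a free group by a free group; one derives a contradiction from the fact that $\Pi_g$ is a Poincaré duality group of dimension $2$ while a nontrivial extension of a nontrivial free group by a nontrivial free group is never a $PD_2$ group (its Euler characteristic would be $\chi(K)\cdot\chi(H) = (1-\operatorname{rk}K)(1-\operatorname{rk}H)$, and more to the point, such a group has infinitely generated $H^2$ or fails Poincaré duality — e.g. $F_2 \rtimes \mathbb{Z}$ has $\operatorname{cd} = 2$ but is not a surface group since it has a free normal subgroup of infinite index giving $H^2$ of the wrong size). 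The cleanest contradiction: a surface group is freely indecomposable and not a nontrivial semidirect product because it is a one-ended $PD_2$ group, whereas $K \rtimes H$ with both factors nontrivial free groups is not one-ended or not $PD_2$.

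For part (b), I would argue by induction on $k$. The base case $k=1$ is the statement that there is no splitting surjection $\widehat{\Pi}_{g_1} \twoheadrightarrow \widehat{\Pi}_{g'}$ with $g' = g_1$; such a surjection between profinite completions of surface groups of equal genus must be an isomorphism, because a surjection of finitely generated profinite groups admitting a section such that the source and target are "the same" (here a Hopfian-type argument: $\widehat{\Pi}_g$ is Hopfian as a finitely generated profinite group, so a surjection that splits and lands in an isomorphic copy is forced to be injective), and then there is nothing to contradict — so in fact for $k=1$ one needs $g' = g_1$ to be \emph{consistent}, and the splitting is just the identity up to automorphism; the real content starts at $k \ge 2$. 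For the inductive step, suppose $\prod_1^k \widehat{\Pi}_{g_i} \twoheadrightarrow \widehat{\Pi}_{g'}$ is a splitting surjection with section $s$. Restricting to the last factor $\widehat{\Pi}_{g_k}$, its image is a closed normal subgroup $N \trianglelefteq \widehat{\Pi}_{g'}$ (normal because $\widehat{\Pi}_{g_k}$ is normal in the product and the map is surjective), and by the profinite analogue of the Kurosh/covering-space dichotomy (every closed subgroup of $\widehat{\Pi}_{g'}$ of infinite index is a free profinite group, and a closed normal subgroup is either trivial, of finite index, or free of infinite rank) together with the splitting, one shows $N$ must be either all of $\widehat{\Pi}_{g'}$ or trivial. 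If $N = \widehat{\Pi}_{g'}$ then the other factors contribute nothing and we contradict the hypothesis that no proper subproduct surjects; if $N$ is trivial then the map factors through $\prod_1^{k-1}\widehat{\Pi}_{g_i}$, and we would get a splitting surjection from a $(k-1)$-fold product onto $\widehat{\Pi}_{g'}$ with $g' = g_1 + \dots + g_k > g_1 + \dots + g_{k-1}$, which by the genus constraint is impossible already because a surjection $\prod_1^{k-1}\widehat{\Pi}_{g_i}\twoheadrightarrow \widehat{\Pi}_{g'}$ with $g'$ strictly larger than $\sum_{i\le k-1} g_i$ cannot exist (comparing, e.g., the maximal rank of elementary abelian $p$-quotients via Lemma \ref{4}: the left side has at most $\sum 2 g_i$ whereas the right has $2g'$).

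The main obstacle I expect is making the normal-subgroup dichotomy in the profinite setting rigorous: unlike the discrete case, closed subgroups of $\widehat{\Pi}_g$ are not literally fundamental groups of coverings, and one must invoke the structure theory of free profinite groups (Nielsen–Schreier for profinite groups, due to Melnikov and others) and the fact that $\widehat{\Pi}_g$ is a profinite Poincaré duality group of dimension $2$ at every prime — this is precisely the kind of input that Zalesskii's appendix supplies, so in the "first proof" route of Step I one sidesteps it by citing Lemma \ref{splitting} directly, and for the self-contained Lemma \ref{sg}(b) one should either reduce cleanly to Lemma \ref{sg}(a) via the Green–Lazarsfeld geometry already in hand, or quote the needed profinite surface-group rigidity. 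Accordingly, my plan for (b) is: (i) use the geometric setup \eqref{split} to know the surjection is split with no proper subproduct surjecting; (ii) pick a minimal subproduct, WLOG the whole product, and peel off one factor as a closed normal subgroup $N$; (iii) rule out $N$ proper nontrivial by a cohomological-dimension count ($\operatorname{cd}_p \widehat{\Pi}_{g'} = 2$ forces the extension $1 \to N \to \widehat{\Pi}_{g'} \to Q \to 1$ to have $\operatorname{cd}_p N + \operatorname{cd}_p Q \le 2$, hence one of them is $\le 1$, hence free pro-$p$, contradicting freely-indecomposable $PD_2$), and rule out $N$ trivial by the genus inequality via Lemma \ref{4}; (iv) conclude. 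The discrete statement (a) is then the $k=1$ shadow of the same argument and can be proved first, purely with classical surface-group facts (cohomological dimension $2$, Poincaré duality, one-endedness), which I would present as the warm-up.
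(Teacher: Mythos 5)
Your proposal has real gaps, and it also diverges structurally from the paper in a way worth flagging. The technical step you lean on in both parts is stated with the inequality reversed: for an extension $1 \to K \to G \to H \to 1$ the Lyndon--Hochschild--Serre spectral sequence gives $\operatorname{cd} G \le \operatorname{cd} K + \operatorname{cd} H$, not the opposite, so $\operatorname{cd}\Pi_g = 2$ yields only a \emph{lower} bound on $\operatorname{cd} K + \operatorname{cd} H$ and does not force both to be $1$ (already $1 \to F_\infty \to F_2 \to \ZZ \to 1$ has $\operatorname{cd} K + \operatorname{cd} H = 2 > 1$). Worse, your ``cleanest contradiction'' --- that a nontrivial (free)-by-(free) group is never a one-ended $PD_2$ group --- is false without finite generation hypotheses: every surjection $\Pi_g \twoheadrightarrow \ZZ$ splits, so $\Pi_g \cong N \rtimes \ZZ$ with $N$ a nontrivial (infinitely generated) free normal subgroup, i.e.\ every surface group \emph{is} such an extension. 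The Euler-characteristic count $(1-\operatorname{rk} K)(1-\operatorname{rk} H) \ge 0 > 2-2g$ does give the contradiction, but only after one knows $K$ is finitely generated, and that is precisely the point requiring an argument (a nontrivial finitely generated normal subgroup of infinite index in a surface group cannot exist); your covering-space dichotomy gives freeness of $K$ and $H$, which the paper also uses, but neither your $PD_2$/ends assertion nor the unqualified $\chi$-multiplicativity closes the gap.

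For (b) your route is genuinely different from the paper's and, as you yourself anticipate, it hinges on exactly the input the paper is structured to avoid: the trichotomy for closed normal subgroups of $\widehat{\Pi}_{g'}$ (trivial / finite index / free profinite) and profinite Poincar\'e duality --- none of which you prove, and your step (iii) repeats the reversed $\operatorname{cd}_p$ inequality. The paper does not deduce (b) from (a) at all. It uses residual finiteness of $\Pi_{g'}$ together with the splitting section to embed the \emph{discrete} group $\Pi_{g'}$ into $\widehat{\Pi}_{g_1}\times K''$, obtaining two nontrivial normal subgroups $K, H'$ of $\Pi_{g'}$ with trivial intersection; these then commute elementwise, Lemma \ref{comm} (commuting hyperbolic M\"obius transformations are simultaneously diagonalizable) shows $K\cdot H'$ is abelian, and a surface group has no nontrivial abelian normal subgroup. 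Everything happens in the classical Fuchsian-group world, with no profinite structure theory. Your induction also has a defective base case: for $k=1$ the identity is a split surjection with $g'=g_1$, so the statement only has content for $k\ge 2$, and the inductive step's appeal to the genus count via Lemma \ref{4} does not by itself rule out the case where the peeled-off factor has proper nontrivial image. If you want a self-contained argument, descending to the discrete group as the paper does is far cheaper than developing the profinite dichotomy.
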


 \begin{proof}
 (a): since $\Pi_g$ contains no elements of finite order, then
 both subgroups $H,K$ are infinite.
 
 Hence the  covering spaces corresponding to the subgroups $K,H$ yield non compact curves.
 
 Therefore both $H,K$ are free groups, and a minimal presentation
 of $ K \rtimes H$ is
 $$ \langle \ga_1, \dots, \ga_m, \de_1, \dots, \de_n| 
 \de_i \ga_j \de_i^{-1} = M_{i,j}(\ga), \forall 1 \leq j \leq m,1 \leq i \leq n
 \rangle.$$
 It must be $ 2g = m +n$, and then we must have only one relation,
 hence $n=m=1$.
 
 In this case $g=2$ but since then $\de \ga \de^{-1} = \ga^{\pm1},$
 we get $\Pi_2 \cong \ZZ^2$, or $(\Pi_2)^{ab} \cong \ZZ \oplus \ZZ/2$, 
 a contradiction.
 
 Proof of (b).

 We have a
 splitting surjection 
 $$ \prod_1^k  \widehat{\Pi}_{g_i}  \twoheadrightarrow \widehat{\Pi}_{g'},$$
 with $ g'  = g_1 + \dots + g_k$, and setting $g : = g_1$,
 we can write it  as 
  $$\widehat{\Pi}_{g} \times    K'' \twoheadrightarrow \widehat{\Pi}_{g'},$$
  $K''$ being  a direct sum of
  profinite completions of surface groups.
  
   Let $K$ be the kernel of $\Pi_{g'} \ra \widehat{\Pi}_{g}$,
  and $H$ be its image. Let moreover 
  $H'$ be the kernel of $\Pi_{g'} \ra K''$.
  
  $H'$ is nonzero, else $\Pi_{g'} \ra K'$ is an embedding,
  contradicting the surjection of tangent spaces
  \eqref{onto}, similarly $K$ is nontrivial.
  
  By our assumptions, $H', K$ are commuting subgroups
  of $ \Pi_{g'} $.
  
  The next lemma is an old tool in complex function theory, which I learnt from  \cite{siegel}.
  
  \begin{lemma}\label{comm}
  If $H', K$ are nontrivial commuting subgroups
  of $ \Pi_{g'} $, then $H' \cdot  K$ is a commutative sugbroup.
  And a  normal  subgroup if $H', K$ are normal.

  \end{lemma}
  \begin{proof}
Given two M\"obius transformations of hyperbolic type,
different from the identity, and not of order 2,
assume that the first equals $ z \mapsto \mu z$, $\mu \in \RR^*$
$\mu \neq 1$, and the second
$$ \s: z \mapsto \frac{az  + b}{cz+d} .$$

Then  commutation of the two transformations  is equivalent to 
$$\mu (az+b) (c \mu z+d) \equiv  (cz + d) (a \mu z+b)  ,$$
 an equality of degree 2 polynomials which amounts to  
 $$ ac =0, \ bd=0.$$
 
  If $b=c=0$, then $\s (z) = \la z$,
if $a=d=0$, then $\s (z) = \la z^{-1}$,
and commutation holds iff $\mu \la = \la \mu^{-1}$,
which implies $\mu= -1$, a contradiction.

Hence the two transformations belong to a commutative subgroup,
which contains any transformation commuting with one of them.

 \end{proof}
 
 Now, $K \cap H' $ consists only of the identity,
 hence $K \cdot H'$ would be a commutative normal subgroup
 of rank at least 2.
 
  We reach a contradiction: because   $ \Pi_{g'} $, does not contain 
  such a normal commutative subgroup $G$, since every normal
  subgroup of infinite index is a free group.
  
  And, if $G$ were of finite index, then it would be a surface group of
  genus $\geq 2~$, hence not abelian.

     \end{proof}
     
     \begin{remark}
     Serge Cantat observed that Lemma  \ref{comm} goes  back to Fricke and Klein: it suffices to show that the two transformations have two common fixpoints, hence they are simultaneously diagonalizable.
     
     Each has an attractive fixpoint and a repelling fixpoint, and since they commute these pairs of points are the same.
     
     He also sketched  an alternative proof of the second statement of Lemma 
      \ref{sg} a):  the limit set of $K$ (in the boundary of the Poincar\'e disk) is invariant by $H$, hence is everything, therefore  $K$ must be
      of  finite index in $\Pi_g$.
     
     \end{remark}
 
  {\bf Step II}  We may now assume $g'_i = g_i, \ \forall i$.
  
 Putting together these maps, we obtain 
 $$ F : = (f_1 \times \dots \times f_n) : X \ra C_1 \times \dots \times C_n,$$
 where $F_* : = \pi_1(F)$ is surjective, because $\psi$ is surjective.
 
 
 Now, since $ \Ga$ is residually finite, it embeds into its completion, as well as $ \Pi_{g_1} \times \dots \times \Pi_{g_n}$.
 Hence $ F_* $ is also injective, hence an isomomorphism.

 The surjectivity of $F$ follows then by (ii), or (ii bis), or ( ii'). 
 
 Then the surjectivity of $F$ and  equality (iii), plus the fact that $K_X$ is ample, show that
 $F$ is an isomorphism.
  
 If we assume instead  the equality    $H^{2n} (\Ga, \ZZ) = H^{2n} (X, \ZZ)$, then $F$ is a blow   up, and an isomorphism 
 if $K_X$ is ample.
 
 \medskip
 
\qed

\subsection{Appendix by Pavel Zalesskii}

\begin{lemma}\label{splitting}
For $g' > g$, there is no splitting surjection
$\widehat{\Pi}_{g'} \twoheadrightarrow \widehat{\Pi}_{g}$.
 \end{lemma}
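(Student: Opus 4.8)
The plan is to transcribe into the profinite setting the elementary argument that rules out a split surjection $\Pi_{g'}\twoheadrightarrow\Pi_{g}$ of ordinary surface groups: a splitting would realize $\Pi_g$ as an infinite-index subgroup of $\Pi_{g'}$, hence as a free group, which is impossible since $\Pi_g$ is not free. So suppose $\rho\colon\widehat{\Pi}_{g'}\twoheadrightarrow\widehat{\Pi}_{g}$ has a continuous section $\sigma\colon\widehat{\Pi}_g\to\widehat{\Pi}_{g'}$ with $\rho\circ\sigma=\id$. Since $\sigma$ has the left inverse $\rho$ it is injective, so $\Delta:=\sigma(\widehat{\Pi}_g)$ is a closed subgroup isomorphic to $\widehat{\Pi}_g$; putting $K:=\ker\rho$, one checks that $K\cap\Delta=\{1\}$ and $K\Delta=\widehat{\Pi}_{g'}$, so that $\widehat{\Pi}_{g'}$ is the internal semidirect product $K\rtimes\Delta$ and the coset space $\widehat{\Pi}_{g'}/\Delta$ is in bijection with $K$.

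The first real step is to see that $\Delta$ has infinite index in $\widehat{\Pi}_{g'}$, i.e. that $K$ is infinite. Since $g'>g$ we have $\widehat{\Pi}_{g'}^{ab}\cong\widehat{\ZZ}^{2g'}\not\cong\widehat{\ZZ}^{2g}\cong\widehat{\Pi}_g^{ab}$, so $\rho$ is not an isomorphism and $K\neq\{1\}$; and $\widehat{\Pi}_{g'}$ is torsion-free (the profinite completion of a surface group is torsion-free), so the nontrivial closed subgroup $K$ is infinite, whence $\widehat{\Pi}_{g'}/\Delta\cong K$ is infinite and $\Delta$ is not open in $\widehat{\Pi}_{g'}$.

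The second step invokes the cohomology of profinite surface groups. The group $\widehat{\Pi}_{g'}$ is a good group in the sense of Serre and an (orientable) profinite Poincar\'e duality group of dimension $2$, so $\mathrm{cd}_p(\widehat{\Pi}_{g'})=2$ for every prime $p$; and, by the profinite analogue of Strebel's theorem — the profinite counterpart of the fact that an infinite-index subgroup of a surface group is free — every closed subgroup of infinite index has cohomological dimension $\leq1$. Applied to $\Delta$ this gives $\mathrm{cd}_p(\Delta)\leq1$ for all $p$. But $\Delta\cong\widehat{\Pi}_g$, and goodness of $\Pi_g$ gives $H^2(\widehat{\Pi}_g,\ZZ/p\ZZ)\cong H^2(\Pi_g,\ZZ/p\ZZ)\cong\ZZ/p\ZZ\neq0$, so $\mathrm{cd}_p(\widehat{\Pi}_g)=2$. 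This contradiction finishes the proof.

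The only step that is not a formal transcription of the discrete argument — and hence the main obstacle — is the statement that an infinite-index closed subgroup of $\widehat{\Pi}_{g'}$ has cohomological dimension $\leq1$; this is the genuinely profinite input, supplied by the theory of profinite Poincar\'e duality groups. When $K$ happens to be topologically finitely generated there is a shortcut: multiplicativity of the profinite Euler characteristic in $\widehat{\Pi}_{g'}=K\rtimes\widehat{\Pi}_g$ would force $\chi(K)=(g'-1)/(g-1)>1$, impossible for a finitely generated projective profinite group; but $K$ generally has infinite rank, so the cohomological-dimension input is what handles all cases.
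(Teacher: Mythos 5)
Your overall strategy is close in spirit to the paper's: both proofs reduce the splitting to a cohomological-dimension contradiction via the subgroup theory of $2$-dimensional duality groups. But the one step you yourself flag as ``the genuinely profinite input'' is exactly where there is a gap. The statement you invoke --- that \emph{every} closed subgroup of infinite index of a profinite $PD^2$ group has $\mathrm{cd}_p\leq 1$ for all $p$ --- is not correct as stated. The correct theorem (Serre, and Neukirch--Schmidt--Wingberg on Poincar\'e groups) requires that $p^{\infty}$ divide the \emph{supernatural} index $[G:H]$; mere infinitude of the index does not suffice. Already for the $PD^1$ group $\widehat{\ZZ}$ one can find closed subgroups of infinite index (index $\prod_p p$, say) that are again isomorphic to $\widehat{\ZZ}$ and have $\mathrm{cd}_p=1$ for every $p$. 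So from ``$K$ is infinite'' you cannot conclude $\mathrm{cd}_p(\Delta)\leq 1$ for any particular $p$: it is a priori possible that the order of $K$ is an infinite supernatural number in which every prime occurs with finite exponent, in which case Serre's theorem goes the other way and gives $\mathrm{cd}_p(\Delta)=\mathrm{cd}_p(\widehat{\Pi}_{g'})=2$ for all $p$, and there is no contradiction.

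To close the gap you must exhibit a single prime $p$ with $p^{\infty}\mid[\widehat{\Pi}_{g'}:\Delta]$, and the natural way to do this is to pass to the maximal pro-$p$ quotients: the splitting induces a split surjection $\widehat{\Pi}^{(p)}_{g'}\twoheadrightarrow\widehat{\Pi}^{(p)}_{g}$, the image of the section is a $2g$-generated closed subgroup of the $2g'$-generated Demushkin group $\widehat{\Pi}^{(p)}_{g'}$, hence (by the rank formula for open subgroups of Demushkin groups) of infinite index, and in the pro-$p$ world infinite index automatically means index divisible by $p^{\infty}$, so Serre's exercise on Demushkin groups applies and yields a free pro-$p$ group of cohomological dimension $2$, a contradiction. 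This is precisely the paper's proof (Zalesskii's appendix): it never works in the full profinite completion at all, but reduces immediately to the pro-$p$ completion, which is what makes the ``infinite index $\Rightarrow$ free'' step legitimate. Your write-up is thus best viewed as a correct skeleton whose load-bearing citation needs to be replaced by the pro-$p$ reduction; your closing remark about Euler characteristics does not rescue the general case, as you note, since $K$ need not be topologically finitely generated.
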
 
  \begin{proof} One has a commutative diagram
  $$\xymatrix{\widehat{\Pi}_{g'}\ar[r]\ar[d]^{\pi_{g'}}& \widehat{\Pi}_{g}\ar[d]^{\pi_g}\\
  {\widehat{\Pi}^{(p)}}_{g'}\ar[r]& {\widehat{\Pi}^{(p)}}_{g}}$$ 
where   $^{(p)}$ means the pro-$p$ completion.

  Suppose on the contrary that we have such a splitting $f:\widehat{\Pi}_{g}\longrightarrow \widehat{\Pi}_{g'}$.  Since $\widehat{\Pi}_{g}$ is $2g$ -generated, so is $\widehat{\Pi}^{(p)}_{g}$ and $f(\widehat{\Pi}_{g})$. It follows that $\pi_{g'}(f(\widehat{\Pi}_{g}))$ is a 2g-generated pro-$p$ group and $\pi_{g'}f$ factors through $\pi_g$. Thus the lower horizontal map splits. Since $\widehat{\Pi}^{(p)}_{g'}$ is $2g'$-generated,  $\pi_{g'}(f(\widehat{\Pi}_{g}))$ has infinite index in $\widehat{\Pi}^{(p)}_{g'}$.  But every subgroup of  infinite index of
a  pro-$p$ surface group is free  pro-$p$  (this is true for Demushkin groups, see \cite[Exercise 5 on page 44]{S 65},  and a pro-$p$ surface group is a particular case of it), so $\pi_{g'}(f(\widehat{\Pi}_{g}))$ must be a free pro-$p$ group, contradicting the fact that
  $\pi_{g'}(f(\widehat{\Pi}_{g})) \cong {\widehat{\Pi}^{(p)}}_{g}$ has cohomological dimension 2. This finishes the proof.

  \end{proof}

  {\em Acknowledgement: Fabrizio would like to thank Serge Cantat and Dan Segal for useful comments.}


\begin{thebibliography}{25}
\setlength{\itemsep}{0pt}

\bibitem{alb-gen}
Fabrizio Catanese,
\emph{
Moduli and classification of irregular K\"ahler manifolds (and algebraic varieties) with  Albanese general type fibrations. }
 Inv. Math. \textbf{104}, No. 1, 263--289 (1991), with an appendix by A. Beauville.

\bibitem{isogenous}
Fabrizio Catanese,
\emph{
Fibred surfaces, varieties isogenous to a product and related moduli spaces. }
Am. J. Math. \textbf{122},  No. 1, 1--44 (2000).

\bibitem{authors}
Sam Hughes, Claudio Llosa Isenrich, Pierre Py, Matthew Stover, Stefano Vidussi,
\emph{Profinite rigidity of K\"ahler groups: Riemann surfaces and subdirect products.}
arXiv:2501.13761.

\bibitem{gl}
Mark Green, Robert Lazarsfeld,
\emph{Higher obstructions to deforming cohomology groups of line bundles.}
J. Am. Math. Soc. 4, No. 1, 87--103 (1991).

\bibitem{ihes} W. Herfort and P. A. Zalesskii, Virtually free pro-$p$ groups,
 Publ. IHES \textbf{118} (2013) 193--211.

\bibitem{RZ} L. Ribes, P.A. Zalesskii. Profinite groups. Second edition, Springer-Verlag, Berlin Heidelberg, 2010.

\bibitem{S 65} Serre J-P.,
  {Sur la dimension cohomologique des groupes profinis,}
 {\empty Topology} \textbf{3}, (1965) 413--420.

\bibitem{siegel}
Cal Ludwig Siegel,
\emph{Topics in complex functions theory. Vol. II: Automorphic functions and Abelian integrals.}
Interscience Tracts in Pure and Applied Mathematics. No. 25, New York etc.: Wiley-Interscience, a division of John Wiley and Sons, Inc., ix, 193 p. (1971).


\end{thebibliography}
\end{document}